\newtheorem{theorem}{Theorem}[section]
\newtheorem{lemma}[theorem]{Lemma}
\newtheorem*{conjecture*}{Conjecture}
\theoremstyle{definition}
\theoremstyle{remark}
\newtheorem*{remark*}{remark}
\author{Runbo Li}
\address{The High School Affiliated to Renmin University of China International Curriculum Center, Beijing 100080, People's Republic of China}
\email{carey.lee.0433@gmail.com}
\title[]{A remark on large even integers of the form $p+P_3$}
\subjclass[2020]{11P32, 11N35, 11N36} 
\keywords{Prime, Goldbach-type problems, Sieve, Application of sieve method}
\begin{document}
	
\begin{abstract}
Let $N$ denotes a sufficiently large even integer, $p$ denotes a prime and $P_{r}$ denotes an integer with at most $r$ prime factors. In this paper, we study the solutions of the equation $N-p=P_3$ and consider two special cases where $p$ is small, and $p,P_3$ are within short intervals.
\end{abstract}

\maketitle

\tableofcontents

\section{Introduction}
Let $N$ denotes a sufficiently large even integer, $p$ denotes a prime, and let $P_{r}$ denotes an integer with at most $r$ prime factors counted with multiplicity. For each $N \geqslant 4$ and $r \geqslant 2$, we define 
\begin{equation}
D_{1,r}(N):= \left|\left\{p : p \leqslant N, N-p=P_{r}\right\}\right|.
\end{equation}

In 1966 Jingrun Chen \cite{Chen1966} proved his remarkable Chen's theorem: let $N$ denotes a sufficiently large even integer, then
\begin{equation}
D_{1,2}(N) \geqslant 0.67\frac{C(N) N}{(\log N)^2}
\end{equation}
where
\begin{equation}
C(N):=\prod_{\substack{p \mid N \\ p>2}} \frac{p-1}{p-2} \prod_{p>2}\left(1-\frac{1}{(p-1)^{2}}\right).
\end{equation}
and the detail was published in \cite{Chen1973}. The original proof of Jingrun Chen was simplified by Pan, Ding and Wang \cite{PDW1975}, Halberstam and Richert \cite{HR74}, Halberstam \cite{Halberstam1975}, Ross \cite{Ross1975}. As Halberstam and Richert indicated in \cite{HR74}, it would be interesting to know whether a more elaborate weighting procedure could be adapted to the purpose of (2). This might lead to numerical improvements and could be important. Chen's constant 0.67 was improved successively to
$$
0.689, 0.7544, 0.81, 0.8285, 0.836, 0.867, 0.899
$$
by Halberstam and Richert \cite{HR74} \cite{Halberstam1975}, Chen \cite{Chen1978_1} \cite{Chen1978_2}, Cai and Lu \cite{CL2002}, Wu \cite{Wu2004}, Cai \cite{CAI867} and Wu \cite{Wu2008} respectively.

Chen's theorem with small primes was first studied by Cai \cite{Cai2002}. For $0<\theta \leqslant 1$, we define 
\begin{equation}
D_{1,r}^{\theta}(N):= \left|\left\{p : p \leqslant N^{\theta}, N-p=P_{r}\right\}\right|.
\end{equation}
Then it is proved in \cite{Cai2002} that for $0.95 \leqslant \theta \leqslant 1$, we have
\begin{equation}
D_{1,2}^{\theta}(N) \gg \frac{C(N) N^{\theta}}{(\log N)^2}.
\end{equation}
Cai's range $0.95 \leqslant \theta \leqslant 1$ was extended successively to $0.945 \leqslant \theta \leqslant 1$ in \cite{CL2011} and to $0.941 \leqslant \theta \leqslant 1$ in \cite{Cai2015}.

Chen's theorem in short intervals was first studied by Ross \cite{Ross1978}. For $0<\kappa \leqslant 1$, we define 
\begin{equation}
D_{1,r}(N,\kappa):= \left|\left\{p : N/2-N^{\kappa} \leqslant p,P_r \leqslant N/2+N^{\kappa}, N=p+P_{r}\right\}\right|.
\end{equation}
Then it is proved in \cite{Ross1978} that for $0.98 \leqslant \kappa \leqslant 1$, we have
\begin{equation}
D_{1,2}(N,\kappa) \gg \frac{C(N) N^{\kappa}}{(\log N)^2}.
\end{equation}
The constant 0.98 was improved successively to
$$
0.974, 0.973, 0.9729, 0.972, 0.971, 0.97
$$
by Wu \cite{Wu1993} \cite{Wu1994}, Salerno and Vitolo \cite{SV1993}, Cai and Lu \cite{CL1999}, Wu \cite{Wu2004} and Cai \cite{CAI867} respectively.

In this paper, we aim to relax the number of prime factors of $N-p$,  and at the same time extend the range of $\theta$. Our improvement partially relies on the cancellation of the use of Wu's mean value theorem. Our main result is the following theorem.
\begin{theorem}\label{t1}
for $0.838 \leqslant \theta \leqslant 1$ and $0.919 \leqslant \kappa \leqslant 1$, we have
$$
D_{1,3}^{\theta}(N) \gg \frac{C(N) N^{\theta}}{(\log N)^2} \quad \operatorname{and} \quad D_{1,3}(N,\kappa) \gg \frac{C(N) N^{\kappa}}{(\log N)^2}.
$$
\end{theorem}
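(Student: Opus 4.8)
The plan is to derive both estimates by a single device — a weighted linear sieve of Chen--Richert type — reducing each to the positivity, with the correct order of magnitude, of one explicit weighted sum. For the first assertion put $\cA=\{N-p:\ p\leqslant N^{\theta},\ (N-p,N)=1\}$, and for the second put $\cA=\{N-p:\ |p-N/2|\leqslant N^{\kappa}\}$; write $P(z)=\prod_{q<z}q$ and $S(\cA,z)=|\{a\in\cA:\ (a,P(z))=1\}|$. Every $a\in\cA$ satisfies $a<N$, so if $a$ has no prime factor below $z=N^{\beta}$ then $\Omega(a)<1/\beta$, where $\Omega$ counts prime divisors with multiplicity. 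The Bombieri--Vinogradov theorem endows $\cA$ with a level of distribution $D=N^{\theta/2-\ve}$ (respectively, a short-interval form of Bombieri--Vinogradov endows it with $D=N^{\kappa/2-\ve}$), while a lower-bound linear sieve is effective only up to $z=D^{1/2}$, which stays below $N^{1/4}$ for every $\theta,\kappa\leqslant1$; hence a bare sieve cannot by itself force $\Omega(a)\leqslant3$, and one inserts Richert-type weights. Concretely, with $z=N^{\beta}$ for some $\beta<\theta/4$ (resp.\ $\beta<\kappa/4$) still to be optimised, one forms
\[
\mathcal{W}=\sum_{\substack{a\in\cA\\(a,P(z))=1}}\Biggl(1-\frac12\sum_{\substack{z\leqslant q<N^{\gamma_1}\\ q\mid a}}1-\frac12\sum_{\substack{z\leqslant q_1\leqslant q_2,\ q_1q_2\mid a\\(q_1,q_2)\in R}}1-\cdots\Biggr),
\]
the thresholds $\gamma_1,\dots$ and the admissible region $R$ of prime pairs (and, if needed, triples) being chosen so that every $a$ receiving a positive weight has $\Omega(a)\leqslant3$, hence $N-p=P_3$. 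Since each weight is at most $1$, this yields $\mathcal{W}\leqslant D_{1,3}^{\theta}(N)$ (resp.\ $\mathcal{W}\leqslant D_{1,3}(N,\kappa)$), so it suffices to prove $\mathcal{W}\gg C(N)N^{\theta}(\log N)^{-2}$ for the first, and the analogous bound with $N^{\kappa}$ for the second.

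I then expand $\mathcal{W}$ by Buchstab's identity into $S(\cA,z)$ minus weighted sums of the shape $\sum_{q}S(\cA_{q},z)$, $\sum_{q_1q_2}S(\cA_{q_1q_2},z)$, and so on. The leading term $S(\cA,z)$ is bounded below by the Rosser--Iwaniec linear sieve combined with Bombieri--Vinogradov — in the short-interval case, combined with its short-interval analogue obtained through the large sieve and zero-density estimates — giving $S(\cA,z)\geqslant\bigl(f(\tfrac{\log D}{\log z})-o(1)\bigr)X$, where $X=|\cA|\prod_{q<z}(1-\tfrac{\omega(q)}{q})\asymp C(N)N^{\theta}(\log N)^{-2}$, $\omega$ is the sieve density of $\cA$, and $f,F$ denote the lower and upper linear sieve functions. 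The subtracted sums are bounded above: for moduli below $D$ one applies the upper linear sieve with Bombieri--Vinogradov directly; for larger moduli, where $\cA_{q}$ is no longer controlled, one invokes \emph{Chen's switching principle}, replacing the sieve of $\cA_{q}$ by a sieve of $\{N-qm:\ m\text{ in the relevant range}\}$ over the free variable $m$, so that only the distribution of primes, not of $\cA$, is needed to high level. After the customary Mellin and Buchstab reductions every piece becomes an explicit integral of $f$ and $F$, and one arrives at
\[
\mathcal{W}\geqslant\bigl(J(\beta;\gamma_1,\dots;R)+o(1)\bigr)\,\frac{C(N)N^{\theta}}{(\log N)^{2}},
\]
with $J$ an explicit, piecewise-defined function of the sieve parameters.

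It then remains to choose those parameters so that $J>0$ throughout $0.838\leqslant\theta\leqslant1$, and likewise $J>0$ throughout $0.919\leqslant\kappa\leqslant1$ in the short-interval case; this numerical optimisation is the heart of the matter. The competing effects are familiar: a smaller $z$ enlarges the main factor $f(\log D/\log z)$, but as $\Omega(a)$ may then be as large as $\lfloor1/\beta\rfloor$ it forces the weight to discard more configurations and so increases the negative part; a larger $z$ does the reverse. The reason a smaller $\theta$ (and $\kappa$) can be reached here than in the corresponding $P_2$ results is simply that a sieve detecting $\Omega\leqslant3$ has more room than one detecting $\Omega\leqslant2$; concretely, the trilinear prime sum that earlier treatments estimated by Wu's mean value theorem — at a quantifiable numerical cost — need not be invoked, since for the $P_3$ target the weight securing $\Omega(a)\leqslant3$ either does not generate that term or lets it be majorised losslessly by the upper-bound sieve, and it is precisely this recovered slack that is spent on lowering the exponents. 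I expect the principal obstacle to be bookkeeping rather than analysis: tracking the many ranges produced by the Buchstab iteration and the switching step, and then verifying the final inequality $J>0$ down to $\theta=0.838$ and $\kappa=0.919$; the analytic inputs — Bombieri--Vinogradov and its short-interval form, the linear sieve inequalities, and Chen's switching principle — are by now standard, so the remaining work is combinatorial and computational.
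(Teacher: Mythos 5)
Your overall strategy coincides with the paper's: a Kuhn--Richert weighted linear sieve applied to $\mathcal{A}=\{N-p\}$, with Bombieri--Vinogradov (and a short-interval analogue) supplying the level of distribution, followed by a numerical verification. But as written there are two genuine gaps. First, the entire content of the theorem is the numerical thresholds $0.838$ and $0.919$, and your proposal stops exactly where that content begins: you never fix the sieve parameter $z$, the weight thresholds, or the admissible region $R$, and you never verify $J>0$ at the endpoints --- you only ``expect'' the bookkeeping to work out. The paper's proof is precisely that verification: it takes $z=N^{1/11.99}$ and the single-layer weight $1-\frac{1}{2}\sum_{N^{1/11.99}\leqslant p<N^{1/3},\,p\mid a}1$ (no pair or triple terms and no switching step --- since every modulus $p<N^{1/3}$ stays below the level $N^{\theta/2}$, the companion sums $S(\mathcal{A}_p;\mathcal{P},z)$ are bounded by the upper-bound linear sieve directly), inserts the explicit formulas for $f$ and $F$ in the relevant range of $s$, and checks numerically that $\Delta_1-\frac{1}{2}\Delta_2\geqslant 0.0009$ at $\theta=0.838$ and $\Delta_3-\frac{1}{2}\Delta_4\geqslant 0.0009$ at $\kappa=0.919$. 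Without exhibiting such parameters and carrying out that computation, no specific value of $\theta$ or $\kappa$ is established.

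Second, your analytic input for the short-interval case is misstated: you assert a level of distribution $D=N^{\kappa/2-\varepsilon}$ for $\mathcal{A}=\{N-p:|p-N/2|\leqslant N^{\kappa}\}$. The available short-interval mean value theorem (Lemma~\ref{mean}, due to Perelli, Pintz and Salerno) gives moduli only up to $N^{\kappa-1/2}(\log N)^{-B}$ for an interval of length $N^{\kappa}$, which is smaller than $N^{\kappa/2}$ throughout $0.919\leqslant\kappa\leqslant 1$. This is not cosmetic: the relation $\kappa-\frac{1}{2}=\frac{\theta}{2}$ (i.e. $0.919-0.5=0.419=0.838/2$) is exactly why the two thresholds in the theorem match, and with your claimed level the deferred optimisation would be run against an input that is not known, so the value $0.919$ could not be justified along your route. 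Your observation that Wu's mean value theorem and Chen's switching can be dispensed with for a $P_3$ target is correct and is indeed the point the paper emphasizes; but the proof still requires the correct level of distribution and the explicit numerics described above.
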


We also generalize our results to integers of the form $ap+bP_3$. For two relatively prime square-free positive integers $a$ and $b$, let $M$ denotes a sufficiently large integer that is relatively prime to both $a$ and $b$, $a,b < M^{\varepsilon}$ and let $M$ be even if $a$ and $b$ are both odd. Let $R_{a,b}^{\theta}(M)$, $R_{a, b}(M,\kappa)$, $R_{a,b}^{\theta}(M,c,d)$ and $R_{a,b}(M,c,d,\kappa)$ denote the number of primes similar to those of \cite{LRB} but satisfy $\frac{M-ap}{b}=P_3$ instead of $P_2$. By using similar arguments as in \cite{LRB}, we prove that
\begin{theorem}\label{t2}
For $0.838 \leqslant \theta \leqslant 1$, $0.919 \leqslant \kappa \leqslant 1$ and $c \leqslant (\log N)^{C}$ where $C$ is a positive constant, we have
$$
R_{a, b}^{\theta}(M) \gg \frac{M^{\theta}}{a b(\log M)^{2}}, \quad R_{a, b}(M,\kappa) \gg \frac{M^{\kappa}}{a b(\log M)^{2}},
$$
$$
R_{a,b}^{\theta}(M,c,d) \gg \prod_{\substack{p \mid c \\ p \nmid M\\ p>2}} \left(\frac{p-1}{p-2}\right)\frac{M^{\theta}}{\varphi(c)ab(\log M)^2}
$$
and
$$
R_{a,b}(M,c,d,\kappa) \gg \prod_{\substack{p \mid c \\ p \nmid M\\ p>2}} \left(\frac{p-1}{p-2}\right)\frac{M^{\kappa}}{\varphi(c)ab(\log M)^2}.
$$
\end{theorem}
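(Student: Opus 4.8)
The plan is to re-run, essentially verbatim, the weighted sieve that produces Theorem~\ref{t1}, now attached to the linear form $ap+bP_3$ and, for the last two assertions, with one additional fixed congruence modulo $c$; this is exactly the route by which \cite{LRB} passes from $D_{1,2}$-type statements to their $ap+bP_2$ analogues. Fix coprime square-free $a,b$ with $a,b<M^{\varepsilon}$ and $\gcd(M,ab)=1$, and consider the sequence
$$
\cA=\set{\tfrac{M-ap}{b}\ :\ p\le M^{\theta},\ b\mid M-ap}
$$
together with the evident short-interval modification for the $\kappa$-statements. Since $a$ and $b$ are square-free and coprime to each other and to $M$, the condition $b\mid M-ap$ pins $p$ to a single reduced class modulo $b$, and for every prime $\ell\nmid ab$ the admissible residues of $(M-ap)/b$ behave exactly as the residues of $N-p$ do in the classical problem; only the primes dividing $ab$ are affected, and there the local densities stay bounded below by an absolute constant. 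Hence the singular series of $\cA$ is $\gg 1$ uniformly in $a,b,M$, so one may simply discard it and state the bounds in the clean shape $M^{\theta}/(ab(\log M)^2)$, the factor $1/(ab)$ accounting, with some slack, for the densities of the two linear conditions --- exactly as \cite{LRB} does in the $P_2$ case.

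First I would remove the small primes, sieving $\cA$ by all primes below $z=M^{\eta}$ for a suitable $\eta=\eta(\theta)$ so that every surviving element is built solely from primes $\ge z$; then I would impose a Richert-type weight of the shape $1-\tfrac12\sum_{z\le q<y}1$, the inner sum running over primes $q$ dividing the element, with $z$ and $y$ optimised so that --- after an application of Chen's switching principle to control the remaining doubtful elements, those composed of several primes in a critical middle range --- a positive weighted count consists only of integers with at most three prime factors. The admissible ranges $\theta\ge0.838$ and $\kappa\ge0.919$ emerge from exactly this optimisation, and the extra slack gained by permitting three rather than two prime factors is what allows one to dispense with Wu's mean value theorem, as announced in the introduction. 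The main term of the weighted sum is then evaluated by the Jurkat--Richert theorem together with a Bombieri--Vinogradov estimate --- in its short-interval analogue when treating $D_{1,3}(N,\kappa)$ --- for the primes in the arithmetic progressions produced by the switching principle; since $a,b<M^{\varepsilon}$ these quantities are negligible against the level of distribution, so all the estimates behind Theorem~\ref{t1} transfer with $N-p$ replaced throughout by $(M-ap)/b$.

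For $R_{a,b}^{\theta}(M,c,d)$ and $R_{a,b}(M,c,d,\kappa)$ I would further restrict $\cA$ to the prescribed reduced residue class modulo $c$. Because $c\le(\log M)^{C}$, the modulus $c$ only enlarges the fixed modulus carried through the sieve by a factor $(\log M)^{O(1)}$, which is absorbed by the error terms of the mean value estimate; the restriction scales the density of $\cA$ by $1/\varphi(c)$, and for each prime $\ell\mid c$ with $\ell\nmid M$ and $\ell>2$ it produces the extra local factor $\tfrac{\ell-1}{\ell-2}$, arising because $\ell$ no longer has to be sieved out of $\cA$ (its single bad residue being excluded by the hypothesis on $d$). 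This reproduces the two displayed products, and assembling the contributions yields all four lower bounds.

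The step I expect to be most delicate is not the weighted-sieve combinatorics, which are identical to those underlying Theorem~\ref{t1}, but verifying that the mean value inputs retain full uniformity once the coefficients $a,b$ and the extra modulus $c$ are in place --- especially in the short-interval regime, where the level of distribution is already tight. This is where one leans most directly on the arguments of \cite{LRB}, and it is the part I would write out in detail.
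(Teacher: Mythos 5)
Your overall route is the one the paper intends: the paper gives no separate argument for Theorem~\ref{t2}, saying only that the details are similar to \cite{LRB} and to Theorem~\ref{t1}, and your plan --- run the Theorem~\ref{t1} sieve on the set $\{(M-ap)/b : p\le M^{\theta},\ b\mid M-ap\}$ and its short-interval variant, observe that only the primes dividing $ab$ change the local densities (by bounded factors, reflected in the $1/(ab)$), and for the $(c,d)$ statements restrict to a reduced class modulo $c$ with $c\le(\log M)^{C}$ absorbed into the Bombieri--Vinogradov and Lemma~\ref{mean} error terms, producing the $1/\varphi(c)$ and the local factors $(p-1)/(p-2)$ --- is exactly that adaptation.

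One correction, and it matters for the claimed ranges: Chen's switching principle plays no role here, and invoking it misdescribes how the $P_3$ conclusion is reached. The mechanism behind Theorem~\ref{t1} is the plain Kuhn-type weight in the decomposition (8)--(9): with $z=N^{1/11.99}$, an element $n$ coprime to $P(z)$ that receives positive weight $1-\frac{1}{2}\sum_{\substack{z\le p<N^{1/3},\ p\mid n}}1$ has at most one prime factor in $[z,N^{1/3})$; since an integer of size at most $N$ has at most three prime factors $\ge N^{1/3}$, and after removing one factor from $[z,N^{1/3})$ the cofactor is at most $N/z$ and so has at most two, such an $n$ is automatically a $P_3$. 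Everything is then estimated by the linear sieve (Lemmas~\ref{l1} and \ref{l2}) together with Bombieri's theorem and Lemma~\ref{mean}; no switched sums ever appear. That is precisely the ``cancellation of the use of Wu's mean value theorem'' announced in the introduction, and it is the reason the ranges can be pushed to $\theta\ge 0.838$ and $\kappa\ge 0.919$ (compare $\theta\ge 0.941$ in the $P_2$ small-prime results). If you actually routed the argument through switching, you would reintroduce exactly the mean value inputs whose absence is the point, and the uniformity you flag as delicate (in $a$, $b$, $c$, and in the short-interval regime) would become a genuine obstruction at these values of $\theta$ and $\kappa$ rather than a routine check. Delete that step; with the weight alone doing the work, the rest of your sketch matches the intended proof.
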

Since the detail of the proof of Theorem~\ref{t2} are similar to those of \cite{LRB} and  Theorem~\ref{t1} so we omit it in this paper. 

\section{Preliminary lemmas}
Let $\mathcal{A}$ denote a finite set of positive integers, $\mathcal{P}$ denote an infinite set of primes and $z \geqslant 2$. Suppose that $|\mathcal{A}| \sim X_{\mathcal{A}}$ and for square-free $d$, put

$$
\mathcal{P}=\{p : (p, N)=1\},\quad
\mathcal{P}(r)=\{p : p \in \mathcal{P},(p, r)=1\},
$$

$$
P(z)=\prod_{\substack{p\in \mathcal{P}\\p<z}} p,\quad
\mathcal{A}_{d}=\{a : a \in \mathcal{A}, a \equiv 0(\bmod d)\},\quad
S(\mathcal{A}; \mathcal{P},z)=\sum_{\substack{a \in \mathcal{A} \\ (a, P(z))=1}} 1.
$$

\begin{lemma}\label{l1} ([\cite{Kan2}, Lemma 1]). If
$$
\sum_{z_{1} \leqslant p<z_{2}} \frac{\omega(p)}{p}=\log \frac{\log z_{2}}{\log z_{1}}+O\left(\frac{1}{\log z_{1}}\right), \quad z_{2}>z_{1} \geqslant 2,
$$
where $\omega(d)$ is a multiplicative function, $0 \leqslant \omega(p)<p, X>1$ is independent of $d$. Then
$$
S(\mathcal{A}; \mathcal{P}, z) \geqslant X_{\mathcal{A}} W(z)\left\{f\left(\frac{\log D}{\log z}\right)+O\left(\frac{1}{\log ^{\frac{1}{3}} D}\right)\right\}-\sum_{\substack{n\leqslant D \\ n \mid P(z)}}\eta(X_{\mathcal{A}}, n)
$$
$$
S(\mathcal{A}; \mathcal{P}, z) \leqslant X_{\mathcal{A}} W(z)\left\{F\left(\frac{\log D}{\log z}\right)+O\left(\frac{1}{\log ^{\frac{1}{3}} D}\right)\right\}+\sum_{\substack{n\leqslant D \\ n \mid P(z)}}\eta(X_{\mathcal{A}},  n)
$$
where
$$
W(z)=\prod_{\substack{p<z \\ (p,N)=1}}\left(1-\frac{\omega(p)}{p}\right),\quad \eta(X_{\mathcal{A}}, n)=\left||\mathcal{A}_n|-\frac{\omega(n)}{n} X_\mathcal{A}\right|=\left|\sum_{\substack{a \in \mathcal{A} \\ a \equiv 0(\bmod n)}} 1-\frac{\omega(n)}{n} X_\mathcal{A}\right|,
$$
$\gamma$ denotes the Euler's constant, $f(s)$ and $F(s)$ are determined by the following differential-difference equation
\begin{align*}
\begin{cases}
F(s)=\frac{2 e^{\gamma}}{s}, \quad f(s)=0, \quad &0<s \leqslant 2,\\
(s F(s))^{\prime}=f(s-1), \quad(s f(s))^{\prime}=F(s-1), \quad &s \geqslant 2 .
\end{cases}
\end{align*}
\end{lemma}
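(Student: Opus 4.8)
The plan is to deduce Lemma~\ref{l1} from the Rosser--Iwaniec construction of the linear sieve (equivalently the Jurkat--Richert theorem), the displayed hypothesis being exactly the one-dimensional density condition. First I would invoke the combinatorial sieve weights $\lambda_d^{\pm}$: real numbers supported on squarefree $d\le D$ with $d\mid P(z)$, satisfying $|\lambda_d^{\pm}|\le 1$ and the fundamental inequality $\sum_{d\mid m}\lambda_d^{-}\le\sum_{d\mid m}\mu(d)\le\sum_{d\mid m}\lambda_d^{+}$ for every $m\mid P(z)$. Writing $S(\mathcal{A};\mathcal{P},z)=\sum_{a\in\mathcal{A}}\sum_{d\mid(a,P(z))}\mu(d)$ and applying these inequalities with $m=(a,P(z))$ gives
$$
\sum_{d\mid P(z)}\lambda_d^{-}\,|\mathcal{A}_d|\;\le\;S(\mathcal{A};\mathcal{P},z)\;\le\;\sum_{d\mid P(z)}\lambda_d^{+}\,|\mathcal{A}_d|.
$$
Substituting $|\mathcal{A}_d|=\frac{\omega(d)}{d}X_{\mathcal{A}}+\bigl(|\mathcal{A}_d|-\frac{\omega(d)}{d}X_{\mathcal{A}}\bigr)$ and using $|\lambda_d^{\pm}|\le 1$ together with the support condition, the second piece contributes at most $\sum_{n\le D,\ n\mid P(z)}\eta(X_{\mathcal{A}},n)$ in absolute value, which is precisely the remainder term in the statement. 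Everything then reduces to evaluating the main terms $X_{\mathcal{A}}\sum_{d\mid P(z)}\lambda_d^{\pm}\frac{\omega(d)}{d}$.

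For these I would first exploit the hypothesis: by partial summation it yields the Mertens-type relation $W(w)/W(z)=\frac{\log z}{\log w}\bigl(1+O(1/\log w)\bigr)$ for $2\le w\le z$, hence $W(z)\asymp 1/\log z$. The heart of the matter is the Rosser--Iwaniec evaluation
$$
\sum_{d\mid P(z)}\lambda_d^{-}\frac{\omega(d)}{d}\ \ge\ W(z)\Bigl(f\bigl(\tfrac{\log D}{\log z}\bigr)+O\bigl((\log D)^{-1/3}\bigr)\Bigr),\qquad \sum_{d\mid P(z)}\lambda_d^{+}\frac{\omega(d)}{d}\ \le\ W(z)\Bigl(F\bigl(\tfrac{\log D}{\log z}\bigr)+O\bigl((\log D)^{-1/3}\bigr)\Bigr).
$$
This is proved by unfolding the combinatorial definition of $\lambda_d^{\pm}$ (Rosser's alternating truncation of the Legendre--Eratosthenes expansion), iterating Buchstab's identity $S(\mathcal{A};\mathcal{P},z)=S(\mathcal{A};\mathcal{P},w)-\sum_{w\le p<z,\ p\in\mathcal{P}}S(\mathcal{A}_p;\mathcal{P},p)$ down the sieve, and passing to the limit in the resulting telescoping sums; the recursions obeyed by the limiting coefficients are the integral equations equivalent to $F(s)=2e^{\gamma}/s,\ f(s)=0$ for $0<s\le 2$ and $(sF(s))'=f(s-1),\ (sf(s))'=F(s-1)$ for $s\ge 2$, which forces the extremal functions to be exactly $f$ and $F$. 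The power-saving error $O((\log D)^{-1/3})$ emerges from balancing the truncation depth of the weights against the accumulated $O(1/\log w)$ losses in the Mertens asymptotic for $W$.

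Combining the last display with the remainder estimate from the first step yields both inequalities of the lemma. The main obstacle is this middle step: the sharp asymptotic evaluation of the sieve-weighted sums with the \emph{explicit} comparison functions $f,F$ and a uniform error term. It demands the full bookkeeping of the Rosser--Iwaniec weights (verifying $|\lambda_d^{\pm}|\le 1$, the sign-alternation mechanism that makes $\lambda^{\pm}$ genuine minorant/majorant weights, and the Buchstab iteration that manufactures the differential--difference system for $f,F$). By comparison, the reduction to weighted divisor sums and the absorption of the error into $\sum_{n\le D,\ n\mid P(z)}\eta(X_{\mathcal{A}},n)$ are routine.
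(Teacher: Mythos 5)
This lemma is not proved in the paper at all: it is quoted verbatim as a known result (the linear sieve in Jurkat--Richert/Iwaniec form, cited as \cite{Kan2}, Lemma 1), so there is no in-paper argument to compare against. Your outline is the standard route to that result and is strategically sound: the displayed hypothesis is indeed the one-dimensional density condition; the reduction via minorant/majorant weights $\lambda_d^{\pm}$ supported on $d\leqslant D$, $d\mid P(z)$, with $|\lambda_d^{\pm}|\leqslant 1$, correctly produces the remainder $\sum_{n\leqslant D,\,n\mid P(z)}\eta(X_{\mathcal{A}},n)$; and the identification of the main-term evaluation with comparison functions $f,F$ satisfying $(sF(s))'=f(s-1)$, $(sf(s))'=F(s-1)$ as the crux is exactly right. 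The error term $O((\log D)^{-1/3})$ does point to Iwaniec's analysis of Rosser's weights rather than the original Jurkat--Richert iteration, and your sketch somewhat conflates the two mechanisms (Rosser's combinatorial truncation versus Buchstab iteration from the fundamental lemma); either can be made to work, but you should pick one and be aware that the $(\log D)^{-1/3}$ saving comes from the Rosser--Iwaniec truncation analysis, not merely from ``balancing Mertens losses.''

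The caveat is that, as a proof, your proposal stops precisely where the content of the lemma begins: the inequalities $\sum_d\lambda_d^{-}\omega(d)/d\geqslant W(z)(f(s)+O((\log D)^{-1/3}))$ and its upper-bound counterpart are asserted, with the verification of the sign/monotonicity properties of the truncated weights and the derivation of the differential--difference system deferred. That material occupies a substantial portion of Halberstam--Richert or of Iwaniec's paper and cannot be compressed into the telescoping remark you give; in particular the passage from the finite truncation recursions to the limiting functions $f,F$ with a uniform error requires the explicit construction of the auxiliary functions $f_n,F_n$ (or Iwaniec's $T_n$-sums) and is where all the difficulty lies. Since the paper itself simply cites the result, the appropriate resolution here is the same: either cite the standard source or commit to reproducing that analysis in full, because the outline alone does not yet constitute a proof of the stated bounds.
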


\begin{lemma}\label{l2} ([\cite{CAI867}, Lemma 2], deduced from \cite{HR74}).
\begin{align*}
F(s)=&\frac{2 e^{\gamma}}{s}, \quad 0<s \leqslant 3;\\
F(s)=&\frac{2 e^{\gamma}}{s}\left(1+\int_{2}^{s-1} \frac{\log (t-1)}{t} d t\right), \quad 3 \leqslant s \leqslant 5 ;\\
F(s)=&\frac{2 e^{\gamma}}{s}\left(1+\int_{2}^{s-1} \frac{\log (t-1)}{t} d t+\int_{2}^{s-3} \frac{\log (t-1)}{t} d t \int_{t+2}^{s-1} \frac{1}{u} \log \frac{u-1}{t+1} d u\right), \quad 5 \leqslant s \leqslant 7;\\
f(s)=&\frac{2 e^{\gamma} \log (s-1)}{s}, \quad 2 \leqslant s \leqslant 4 ;\\
f(s)=&\frac{2 e^{\gamma}}{s}\left(\log (s-1)+\int_{3}^{s-1} \frac{d t}{t} \int_{2}^{t-1} \frac{\log (u-1)}{u} d u\right), \quad 4 \leqslant s \leqslant 6 ;\\
f(s)=& \frac{2 e^{\gamma}}{s}\left(\log (s-1)+\int_{3}^{s-1} \frac{d t}{t} \int_{2}^{t-1}\frac{\log (u-1)}{u} d u\right.\\ & \left.+\int_{2}^{s-4} \frac{\log (t-1)}{t} d t \int_{t+2}^{s-2} \frac{1}{u} \log \frac{u-1}{t+1} \log \frac{s}{u+2} d u\right), \quad 6 \leqslant s \leqslant 8.
\end{align*}
\end{lemma}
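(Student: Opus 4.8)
These are the classical lower and upper linear-sieve functions, and the stated closed forms (each valid on the indicated range) are obtained by \emph{successive continuation} of the solution pair $(f,F)$ of the differential-difference system in Lemma~\ref{l1}. The plan is to start from the prescribed values $F(s)=2e^{\gamma}/s$ and $f(s)=0$ on $(0,2]$ and to integrate the identities $(sF(s))'=f(s-1)$ and $(sf(s))'=F(s-1)$ over one unit interval at a time: on $[k,k+1]$ the right-hand side is already known from the step on $[k-1,k]$, so the formula there follows by a single integration, the constant being fixed by continuity at the left endpoint. Since each such step is a first-order linear ODE with prescribed initial value, the resulting expressions are uniquely determined and continuous across the break-points---a fact I would use throughout as a consistency check.

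Carrying this out: on $(2,3]$ one has $s-1\in(1,2]$, hence $f(s-1)=0$, so $sF(s)$ is constant and equals $2e^{\gamma}$; this gives $F(s)=2e^{\gamma}/s$ on all of $(0,3]$. For $2\leqslant s\leqslant4$ one has $s-1\in[1,3]$, where $F(s-1)=2e^{\gamma}/(s-1)$, so integrating $(sf(s))'=2e^{\gamma}/(s-1)$ from $s=2$ (where $f=0$) yields $sf(s)=2e^{\gamma}\log(s-1)$. For $3\leqslant s\leqslant5$ one has $s-1\in[2,4]$ and $f$ is now known, so integrating $(sF(s))'=f(s-1)$ from $s=3$ and substituting $t\mapsto t-1$ produces the stated $F$ on $[3,5]$; feeding this back into $(sf(s))'=F(s-1)$ and integrating from $s=4$ gives the stated $f$ on $[4,6]$, and one more round of the same two steps gives $F$ on $[5,7]$ and $f$ on $[6,8]$.

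The genuinely laborious part is these last two formulas, where the integrand at the continuation step is itself a single or double integral. Expanding it and writing the inner logarithms as integrals (for instance $\log\frac{u-1}{t+1}=\int_{t+1}^{u-1}\frac{d\tau}{\tau}$) turns each computation into interchanging the order of integration over a simplex-type region and then returning to closed form; this is precisely how the nested integrals $\int_{2}^{s-3}(\cdots)\int_{t+2}^{s-1}(\cdots)$ and the extra weight $\log\frac{s}{u+2}$ appearing in the statement are produced. I expect this bookkeeping---correctly describing the region of integration, applying Fubini, and normalizing the limits by the shifts $t\mapsto t-1$ and its analogues---to be the main (indeed the only) obstacle; at each break-point $s\in\{3,4,5,6,7\}$ one verifies that the two formulas valid there agree, the newly introduced integrals vanishing at the left endpoint, which catches any sign or limit slip.
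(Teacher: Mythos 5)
The paper offers no proof of this lemma at all --- it is quoted verbatim from [CAI867, Lemma 2] --- so your task was to reconstruct the derivation, and your plan (successive continuation of the Jurkat--Richert system, one unit interval at a time, with Fubini on the simplex-type regions) is indeed the standard and correct mechanism. Carried out faithfully it does reproduce the stated formulas for $F$ on $(0,7]$ and for $f$ on $[2,6]$ exactly. But your confidence that the same bookkeeping ``produces'' the weight $\log\frac{s}{u+2}$ in the last formula is misplaced: it does not. Integrating $(sf(s))'=F(s-1)$ from $6$ and applying Fubini to the triple integral
$$
\int_{5}^{s-1}\frac{dx}{x}\int_{2}^{x-3}\frac{\log(t-1)}{t}\,dt\int_{t+2}^{x-1}\frac{1}{u}\log\frac{u-1}{t+1}\,du
=\int_{2}^{s-4}\frac{\log(t-1)}{t}\,dt\int_{t+2}^{s-2}\frac{1}{u}\log\frac{u-1}{t+1}\log\frac{s-1}{u+1}\,du,
$$
since the innermost $x$-range is $u+1\leqslant x\leqslant s-1$, the exact continuation carries the kernel $\log\frac{s-1}{u+1}$, not $\log\frac{s}{u+2}$. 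One can confirm this by differentiating: with $\log\frac{s-1}{u+1}$ one gets $(sf(s))'=F(s-1)$ exactly, whereas with the quoted $\log\frac{s}{u+2}$ the third term of $(sf(s))'$ acquires the factor $\frac1s$ instead of $\frac1{s-1}$, so the quoted expression does not solve the differential-difference equation on $(6,8]$. Note also that the consistency check you rely on (new integrals vanishing at the left break-point and at the boundary $u=s-2$) is blind to this discrepancy, since both kernels vanish there.

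So there is a concrete gap between your plan and the statement as quoted: executed correctly, your argument proves the formula with $\log\frac{s-1}{u+1}$, and to recover the lemma in the quoted form you must add the pointwise inequality $\log\frac{s}{u+2}\leqslant\log\frac{s-1}{u+1}$ (valid precisely on the integration range $u\leqslant s-2$, by $ (s-1)(u+2)-s(u+1)=s-u-2\geqslant 0$), together with the nonnegativity of the other factors. That turns the quoted right-hand side into a minorant of the true $f(s)$ on $[6,8]$ --- harmless for the paper, since $f$ only ever enters through lower bounds, but it means the displayed identity for $6\leqslant s\leqslant 8$ should either be read as an inequality or be corrected to the kernel $\log\frac{s-1}{u+1}$; your sketch as written would not surface this and would claim an equality that the computation does not deliver.
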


\begin{lemma}\label{mean} ([\cite{Perelli1984}, Theorem]).
For any given constant $A>0$, there exists a constant $B=B(A)>0$ such that
$$
\sum_{d \leqslant x^{t-1/2} (\log x)^{-B}} \max _{x/2 \leqslant y \leqslant x}\max _{(l, d)=1}\max _{h \leqslant x^t} \left|\pi(y+h;d,l)-\pi(y;d,l)-\frac{h}{\varphi(d)}\right| \ll \frac{x^t}{\log ^{A} x},
$$
where
$$
\frac{3}{5}<t \leqslant 1.
$$
\end{lemma}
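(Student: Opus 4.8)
The statement is a short-interval Bombieri--Vinogradov theorem, and I would prove it by the classical route: reduce to primitive characters, apply the truncated explicit formula, and feed the zero-sum into a hybrid (large-sieve) zero-density estimate.

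\textbf{Step 1 (Reductions).} First I would replace $\pi$ by $\psi(y;d,l)=\sum_{n\le y,\,n\equiv l\,(d)}\Lambda(n)$, the contribution of prime powers and the passage between $\pi$ and $\psi$ being negligible after partial summation. For $(l,d)=1$, orthogonality gives
\[
\psi(y+h;d,l)-\psi(y;d,l)=\frac{1}{\varphi(d)}\sum_{\chi\bmod d}\bar\chi(l)\bigl(\psi(y+h,\chi)-\psi(y,\chi)\bigr).
\]
The principal character reproduces $\frac{1}{\varphi(d)}\bigl(\psi(y+h)-\psi(y)\bigr)=\frac{h}{\varphi(d)}+O\!\left(\frac{h}{\varphi(d)(\log x)^{A'}}\right)$ by Huxley's single-modulus short-interval prime number theorem, which applies since $t>3/5>7/12$, and whose total over $d\le Q$ is negligible. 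Taking $\max_l$ of the remaining characters, passing from $\chi\bmod d$ to the inducing primitive $\chi^{*}\bmod q$ (a change of $O(\log^2 x)$), and swapping the order of summation $d=qm$ reduces the whole sum to
\[
\sum_{q\le Q}\frac{\log Q}{\varphi(q)}\sum_{\chi\bmod q}^{*}\ \max_{x/2\le y\le x}\ \max_{h\le x^t}\bigl|\psi(y+h,\chi)-\psi(y,\chi)\bigr|,\qquad Q=x^{t-1/2}(\log x)^{-B}.
\]
The two remaining maxima over $y$ and $h$ I would strip off by a Gallagher-type device (a dyadic net together with the fundamental theorem of calculus), at the cost of harmless powers of $\log x$.

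\textbf{Step 2 (Explicit formula).} For each primitive $\chi\bmod q$ with $q>1$ the truncated explicit formula gives
\[
\psi(y+h,\chi)-\psi(y,\chi)=-\sum_{|\gamma|\le T}\frac{(y+h)^{\rho}-y^{\rho}}{\rho}+O\!\left(\frac{x\log^2(qxT)}{T}\right),
\]
with $\rho=\beta+i\gamma$ running over the nontrivial zeros of $L(s,\chi)$. Weighted by $\frac{\log Q}{\varphi(q)}$, the error contributes $\ll xQ(\log x)^3/T$, which is $\ll x^t(\log x)^{-A}$ once $T\ge x^{1/2}(\log x)^{A+3}$, so I would take $T$ of this size. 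For the zero terms I would use
\[
\left|\frac{(y+h)^{\rho}-y^{\rho}}{\rho}\right|\ll\min\!\left(h\,x^{\beta-1},\ \frac{x^{\beta}}{|\gamma|}\right),
\]
the two branches crossing at the transition height $|\gamma|\asymp x/h=x^{1-t}$.

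\textbf{Step 3 (Zero density), and the obstacle.} The decisive input is a hybrid zero-density estimate of large-sieve type, $\sum_{q\le Q}\sum_{\chi\bmod q}^{*}N(\sigma,T,\chi)\ll (Q^2T)^{g(\sigma)}(\log x)^{c}$ for $\tfrac12\le\sigma\le1$, with $g(1/2)=1$, combined with the arithmetic identity $Q^2\cdot x^{1-t}=x^{2(t-1/2)}x^{1-t}=x^{t}$ --- which is exactly why the modulus is capped at $x^{t-1/2}$. Splitting the zeros dyadically in $|\gamma|$ and in $1-\sigma$ and inserting the $\min$-bound, the zeros within $O(1/\log x)$ of the critical line contribute $\asymp x^t(\log x)^{O(1)}$ and are swallowed by the saving $(\log x)^{-B}$ built into $Q$ (this is what forces the factor $(\log x)^{-B}$ into the statement). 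For $\sigma$ bounded away from $\tfrac12$ the weighted count behaves like $Q^{-1}(Q^2x^{1-t})^{g(\sigma)}=x^{\,t g(\sigma)-(t-1/2)}$, and a short computation shows the total is admissible precisely when $g(\sigma)<1-\frac{2\sigma-1}{2t}$ for all $\tfrac12<\sigma\le1$. This is the entire difficulty: I expect that feeding in only the Ingham--Montgomery bound $g(\sigma)=\tfrac{3(1-\sigma)}{2-\sigma}$ yields the weaker threshold $t>3/4$, so one must use the sharper hybrid estimates furnished by Huxley's large-value method (with the refinements of Jutila and Heath-Brown) in the mid-range of $\sigma$, and it is the optimization of these density bounds against the line $1-(2\sigma-1)/(2t)$ that produces the admissible range $t>3/5$. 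The main obstacle is therefore purely analytic and concentrated in this step --- securing a hybrid zero-density estimate strong enough just above the critical line --- the averaging over $q\le x^{t-1/2}$ through the large sieve being exactly what makes it attainable.
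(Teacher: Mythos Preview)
The paper does not prove this lemma at all: it is quoted as the main theorem of \cite{Perelli1984} and used only as a black box to control the error terms when applying Lemma~\ref{l1} to the short-interval set $\mathcal{B}$. There is therefore no proof in the paper to compare your proposal against.

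Your sketch is nonetheless a coherent outline of the zero-density route to such a statement. The arithmetic identity $Q^{2}x^{1-t}=x^{t}$ for $Q=x^{t-1/2}$ is exactly why this modulus cap arises, and your check that the Ingham--Montgomery exponent $g(\sigma)=3(1-\sigma)/(2-\sigma)$ alone gives only $t>3/4$ (since $-g'(1/2)=4/3$ must exceed $1/t$) is correct. The one genuine gap is that Step~3 is not carried through: you identify the obstacle---securing hybrid density bounds sharp enough near $\sigma=1/2$ to reach $t>3/5$---but do not resolve it, and the optimization across ranges of $\sigma$ and $|\gamma|$ is the entire content of the proof via this route. It is worth noting that Perelli's actual argument (and most subsequent short-interval Bombieri--Vinogradov theorems) proceeds differently, via a Heath-Brown--type combinatorial decomposition of $\Lambda(n)$ combined with large-sieve mean-value estimates for Dirichlet polynomials, which sidesteps the zero-density optimization you flag as the main difficulty; your approach is legitimate but is not the one in the cited source.
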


\begin{lemma}\label{Wfunction}
If we define the function $\omega$ as $\omega(p)=0$ for primes $p \mid N$ and $\omega(p)=\frac{p}{p-1}$ for other primes and $N^{\frac{1}{\alpha}-\varepsilon}<z \leqslant N^{\frac{1}{\alpha}}$, then we have
$$
W(z)=\frac{2\alpha e^{-\gamma} C(N)(1+o(1))}{\log N}.
$$
\end{lemma}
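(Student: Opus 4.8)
The plan is to evaluate the Euler product $W(z)=\prod_{p<z,\,(p,N)=1}\bigl(1-\omega(p)/p\bigr)$ by elementary manipulations, reducing it to a Mertens-type factor times two absolutely convergent products, and then to recognise the outcome as a fixed multiple of $C(N)$.

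First I would observe that for $p\nmid N$ one has $1-\omega(p)/p=1-\frac{1}{p-1}=\frac{p-2}{p-1}$, and that since $N$ is even the prime $p=2$ is excluded from the product. Hence
$$
W(z)=\prod_{\substack{2<p<z\\ p\nmid N}}\frac{p-2}{p-1}
=\Bigl(\prod_{\substack{2<p\mid N\\ p<z}}\frac{p-1}{p-2}\Bigr)\,\prod_{2<p<z}\frac{p-2}{p-1}.
$$
For the first factor, only $O(1)$ primes dividing $N$ exceed $z$, each contributing $1+O(z^{-1})$, so $\prod_{2<p\mid N,\,p<z}\frac{p-1}{p-2}=(1+o(1))\prod_{p\mid N,\,p>2}\frac{p-1}{p-2}$. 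For the second factor I would use the identity $\frac{p-2}{p-1}=\bigl(1-\frac{1}{(p-1)^2}\bigr)\bigl(1-\frac1p\bigr)$, which splits it as
$$
\prod_{2<p<z}\frac{p-2}{p-1}=\Bigl(\prod_{2<p<z}\Bigl(1-\tfrac{1}{(p-1)^2}\Bigr)\Bigr)\Bigl(\prod_{2<p<z}\Bigl(1-\tfrac1p\Bigr)\Bigr).
$$
The product of the factors $1-(p-1)^{-2}$ converges absolutely with tail $1+O(1/z)$, hence equals $(1+o(1))\prod_{p>2}\bigl(1-(p-1)^{-2}\bigr)$; and Mertens' theorem gives $\prod_{2<p<z}(1-1/p)=2\prod_{p<z}(1-1/p)=\frac{2e^{-\gamma}}{\log z}\,(1+o(1))$.

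Collecting the three pieces yields
$$
W(z)=(1+o(1))\,\frac{2e^{-\gamma}}{\log z}\prod_{\substack{p\mid N\\ p>2}}\frac{p-1}{p-2}\prod_{p>2}\Bigl(1-\frac{1}{(p-1)^2}\Bigr)=(1+o(1))\,\frac{2e^{-\gamma}C(N)}{\log z},
$$
and the hypothesis $N^{1/\alpha-\varepsilon}<z\le N^{1/\alpha}$ gives $\log z=(1+o(1))\tfrac{\log N}{\alpha}$, so $W(z)=\frac{2\alpha e^{-\gamma}C(N)(1+o(1))}{\log N}$, as claimed.

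This is a routine computation and I do not anticipate a genuine obstacle. The only points that need care are: that evenness of $N$ is precisely what removes the $p=2$ factor and hence produces the constant $2$ in the numerator; that the $O(1)$ large prime divisors of $N$ must be controlled so that truncating the finite product in $C(N)$ at $z$ costs only $1+o(1)$; and that passing from $\log z$ to $\log N/\alpha$ requires treating $\varepsilon$ as arbitrarily small (or absorbing an $O(\varepsilon)$ into the error term).
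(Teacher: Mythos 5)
Your proof is correct and follows essentially the same route as the paper's: the paper likewise rewrites $W(z)$ by inserting the Mertens product $\prod_{p<z}(1-1/p)$, so that the remaining factor $\prod_{p\mid N}\frac{p}{p-1}\prod_{(p,N)=1}\bigl(1-\frac{\omega(p)}{p}\bigr)\bigl(1-\frac{1}{p}\bigr)^{-1}$ converges to $2C(N)$ (the $2$ coming from $p=2\mid N$), giving $\frac{2\alpha e^{-\gamma}C(N)(1+o(1))}{\log N}$ in a single display citing Cai. The care you take with truncating at $z$ and with $\log z$ versus $\log N/\alpha$ is exactly what the paper leaves implicit, and is harmless since in the application $z=N^{1/11.99}$ exactly.
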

\begin{proof} By similar arguments as in \cite{Cai2002}, we have
\begin{align}
\nonumber W(z)&=\prod_{p \mid N} \frac{p}{p-1} \prod_{(p,N)=1} \left(1-\frac{\omega(p)}{p}\right)\left(1-\frac{1}{p}\right)^{-1} \frac{\alpha e^{-\gamma}(1+o(1))}{\log N} \\
\nonumber & =\frac{2\alpha e^{-\gamma} C(N)(1+o(1))}{\log N}.
\end{align}
\end{proof}

\section{Proof of Theorem 1.1}
Let $\theta=0.838$ and $\kappa=0.919$ in this section. Put
$$
\mathcal{A}=\{N-p : p \leqslant N^\theta \} \quad \operatorname{and} \quad \mathcal{B}=\{N-p : N/2-N^\kappa \leqslant p \leqslant N/2+N^\kappa \}.
$$
Clearly we have
\begin{align}
D_{1,3}^{\theta}(N) &\geqslant S\left(\mathcal{A}; \mathcal{P}, N^{\frac{1}{11.99}}\right)-\frac{1}{2} \sum_{\substack{N^{\frac{1}{11.99}} \leqslant p<N^{\frac{1}{3}} \\ (p, N)=1}}S\left(\mathcal{A}_{p}; \mathcal{P}, N^{\frac{1}{11.99}}\right)=S_1-\frac{1}{2}S_2, \\
D_{1,3}(N,\kappa) &\geqslant S\left(\mathcal{B}; \mathcal{P}, N^{\frac{1}{11.99}}\right)-\frac{1}{2} \sum_{\substack{N^{\frac{1}{11.99}} \leqslant p<N^{\frac{1}{3}} \\ (p, N)=1}}S\left(\mathcal{B}_{p}; \mathcal{P}, N^{\frac{1}{11.99}}\right)=S^{\prime}_1-\frac{1}{2}S^{\prime}_2.
\end{align}

Now we define the function $\omega$ as $\omega(p)=0$ for primes $p \mid N$ and $\omega(p)=\frac{p}{p-1}$ for other primes. We can take 
$$
X_{\mathcal{A}}\sim \frac{N^{\theta}}{\theta \log N} \quad \operatorname{and} \quad X_{\mathcal{B}}\sim \frac{2N^{\kappa}}{\log N}.
$$

By Lemmas~\ref{l1}--\ref{Wfunction}, Bombieri's theorem and some routine arguments, we have
\begin{align}
S_1 &\geqslant (1+o(1)) \frac{8 \Delta_1 C(N) N^{\theta}}{\theta^{2} (\log N)^2}, \quad S_2 \leqslant (1+o(1)) \frac{8 \Delta_2 C(N) N^{\theta}}{\theta^{2} (\log N)^2}, \\
S^{\prime}_1 &\geqslant (1+o(1)) \frac{16 \Delta_3 C(N) N^{\kappa}}{(2\kappa-1) (\log N)^2}, \quad S^{\prime}_2 \leqslant (1+o(1)) \frac{16 \Delta_4 C(N) N^{\kappa}}{(2\kappa-1) (\log N)^2},
\end{align}
where
\begin{align}
\Delta_1 =& \log (5.995 \theta-1)+\int_{2}^{5.995 \theta -2} \frac{\log (s-1)}{s} \log \frac{5.995 \theta-1}{s+1} d s, \\
\Delta_2 =& \log \left(\frac{11.99 \theta-2}{3 \theta-2}\right)+\int_{2}^{5.995 \theta-2} \frac{\log (s-1)}{s} \log \frac{(5.995 \theta-1)(5.995 \theta-1-s)}{s+1} d s, \\
\Delta_3 =& \log (11.99 \kappa-6.995)+\int_{2}^{11.99 \kappa-7.995} \frac{\log (s-1)}{s} \log \frac{11.99 \kappa-6.995}{s+1} d s, \\
\Delta_4 =& \log \left(\frac{23.98 \kappa-13.99}{6 \kappa-5}\right)+\int_{2}^{11.99 \kappa-7.995} \frac{\log (s-1)}{s} \log \frac{(11.99 \kappa-6.995)(11.99 \kappa-6.995-s)}{s+1} d s.
\end{align}
By numerical calculations we get that
\begin{equation}
\Delta_1 - \frac{1}{2} \Delta_2 \geqslant 0.0009 \quad \operatorname{and} \quad \Delta_3 - \frac{1}{2} \Delta_4 \geqslant 0.0009.
\end{equation}
Then by (8)--(16) we have
$$
D_{1,3}^{\theta}(N) \gg \frac{C(N) N^{\theta}}{(\log N)^2} \quad \operatorname{and} \quad D_{1,3}(N,\kappa) \gg \frac{C(N) N^{\kappa}}{(\log N)^2}.
$$
Theorem~\ref{t1} is proved.

\bibliographystyle{plain}
\bibliography{bib}
\end{document}